\title{Kac's Lemma and countable generators for actions of countable groups}
\author{Tom Meyerovitch}
\address{Ben Gurion University of the Negev.
	Departement of Mathematics.
	Be'er Sheva, 8410501, Israel
}
\email{mtom@bgu.ac.il}
\author{Benjamin Weiss}
\address{The Hebrew University of Jerusalem.
Einstein Institute of Mathematics
Edmond J. Safra Campus
Givat Ram. Jerusalem, 9190401, Israel
}
\email{weiss@math.huji.ac.il }
\crefname{theorem}{Theorem}{Theorems}
\crefname{thm}{Theorem}{Theorems}
\crefname{mainthm}{Theorem}{Theorems}
\crefname{lemma}{Lemma}{Lemmas}
\crefname{lem}{Lemma}{Lemmas}
\crefname{remark}{Remark}{Remarks}
\crefname{prop}{Proposition}{Propositions}
\crefname{defn}{Definition}{Definitions}
\crefname{corollary}{Corollary}{Corollaries}
\crefname{cor}{Corollary}{Corollaries}
\crefname{section}{Section}{Sections}
\crefname{figure}{Figure}{Figures}
\crefname{quest}{Question}{Questions}
\begin{document}

\newtheorem{thm}{Theorem}[section]
\newtheorem{lemma}[thm]{Lemma}
\newtheorem{prop}[thm]{Proposition}
\newtheorem{cor}[thm]{Corollary}
\newtheorem{claim}[thm]{Claim}
\newtheorem{quest}[thm]{Question}
\newtheorem{fact}[thm]{Fact}
\theoremstyle{definition}
\newtheorem{definition}[thm]{Definition}
\newtheorem{example}[thm]{Example}
\newtheorem{remark}[thm]{Remark}
\newtheorem{conj}{Conjecture}

\newcommand{\N}{\mathbb{N}}
\newcommand{\Z}{\mathbb{Z}}
\newcommand{\Q}{\mathbf{Q}}
\newcommand{\R}{\mathbb{R}}
\newcommand{\C}{\mathbf{C}}
\newcommand{{\cP}}{\mathcal{P}}
\maketitle
\begin{abstract}
Kac's lemma determines the expected return time to a set of positive measure under iterations of an  ergodic probability preserving transformations.
    We introduce the notion of an \emph{allocation} for a probability preserving action of a countable group. Using this notion, we formulate and prove generalization of Kac's lemma for an action of a general countable group, and another generalization that applies to probability preserving equivalence relations.
     As an application, we provide a short proof for the existence of  countable generating partitions for  any ergodic action of a countable group.
\end{abstract}
\section{Introduction}
Kac's lemma \cite{MR0022323} 
is a basic result in ergodic theory.
It asserts that the expected number of iterates that it takes a point from a measurable set $A$ to return to the set $A$ under an ergodic probability-preserving transformation is equal to the inverse of the  measure of $A$.
Here is a formal statement:

\begin{thm}[Kac's lemma]\label{thm:Kac_lemma}
Let $(X,\mathcal{B},\mu)$ be a probability space, let
$T:X \to X$ be an ergodic measure preserving transformation of $(X,\mathcal{B},\mu)$ and let $A \in \mathcal{B}$ be a measurable set with $\mu(A) >0$.
Then 
\begin{equation}\label{eq:Kac_1}
\int_A r_A(x) d\mu(x) = 1,    
\end{equation}
where $r_A$ is given by:

\begin{equation}\label{eq:r_A_def}
    r_A(x) = \min \left\{ n \ge 1~:~ T^n(x) \in A \right\}.
\end{equation}
\end{thm}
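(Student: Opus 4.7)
The plan is to prove Kac's lemma by the classical \emph{tower decomposition}, partitioning $X$ into columns indexed by the first return time, and reading off the integral from the measures of the columns.

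First I would slice $A$ according to the value of $r_A$: set
\[
A_n = \{x \in A : r_A(x) = n\}, \qquad n \ge 1.
\]
By the Poincar\'e recurrence theorem, $r_A < \infty$ almost everywhere on $A$, so the $A_n$ partition $A$ up to a null set and $\int_A r_A\, d\mu = \sum_{n \ge 1} n\,\mu(A_n)$. Assuming $T$ is invertible (the standard setup; the non-invertible case is handled by a natural extension argument), for each $n$ I would build the Kakutani--Rokhlin column
\[
C_n = \bigsqcup_{i=0}^{n-1} T^i(A_n).
\]

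The heart of the argument is to verify that the family $\{T^i(A_n) : n \ge 1,\ 0 \le i < n\}$ is pairwise disjoint. If $T^i(a) = T^j(b)$ with $a \in A_n$, $b \in A_m$ and $0 \le i \le j$, then invertibility gives $T^{j-i}(b) = a \in A$, forcing either $j-i = 0$ and $a=b$ (so $(n,i)=(m,j)$) or $j - i \ge m$ in contradiction with $j < m$. Once disjointness is in hand, $T$-invariance of $\mu$ yields $\mu(C_n) = n\,\mu(A_n)$, and summing gives
\[
\sum_{n\ge 1} n\,\mu(A_n) \;=\; \mu\!\Big(\bigsqcup_{n\ge 1} C_n\Big) \;=\; \mu\!\Big(\bigcup_{k \ge 0} T^k(A)\Big).
\]

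The main (and really only) substantive step is the final identification $\mu\!\big(\bigcup_{k\ge 0} T^k(A)\big) = 1$: the set $\bigcup_{k\ge 0} T^k(A)$ is forward $T$-invariant and has positive measure since it contains $A$, so by ergodicity it has full measure. Combining this with the displayed identity yields $\int_A r_A\, d\mu = 1$, which is \eqref{eq:Kac_1}. I expect the only friction in writing this up cleanly to be being careful about null sets when asserting that the $A_n$ partition $A$ and that the tower exhausts $X$.
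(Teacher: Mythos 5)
Your proof is correct, but it is not the route the paper takes. You give the classical Kakutani-tower argument: slice $A$ into the level sets $A_n$ of the return time, erect the pairwise disjoint columns $C_n=\bigsqcup_{i=0}^{n-1}T^i(A_n)$, and use ergodicity (via forward invariance of $\bigcup_{k\ge 0}T^k(A)$) to see that the tower exhausts $X$. The one step you assert rather than prove --- that $\bigsqcup_n C_n$ coincides mod $\mu$ with $\bigcup_{k\ge 0}T^k(A)$ --- does require the ``last backward visit'' argument (for $y=T^k(a)$ with $a\in A$, set $i=\min\{j\ge 0 : T^{-j}(y)\in A\}$ and use Poincar\'e recurrence to place $T^{-i}(y)$ in some $A_n$ with $n>i$), but you flag this and it is routine. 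The paper instead obtains \eqref{eq:Kac_1} as a special case of its result on $A$-allocations (\Cref{thm:Kacs_lemma_A_allocation}): it takes the allocation $\kappa(x)=\min\{n\ge 0 : T^n(x)\in A\}$ sending each point to its next visit to $A$, for which the cell of $T_A(x)$ satisfies $|B_\kappa(T_A(x))|=r_A(x)$; the general proposition gives $\int_A |B_\kappa(x)|\,d\mu(x)=1$, and the classical fact that the first-return map $T_A$ preserves $\mu|_A$ converts this into $\int_A r_A\,d\mu=1$. Under the hood the two arguments use the same decomposition of $X$ into finite orbit segments between consecutive visits to $A$: you index the pieces by the return-time value (columns over $A_n$), while the paper indexes them by the point of $A$ to which they are allocated (cells $B_\kappa$), and both then reduce to measure preservation plus countable additivity. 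What your version buys: it is self-contained, invokes ergodicity directly, and needs no properties of the induced transformation. What the paper's version buys: the allocation formalism makes no reference to the order structure of $\Z$, so the identical computation yields the generalization to arbitrary countable groups (\Cref{thm:Kac_function_group_action}), which is the point of the paper; the price is that recovering the classical statement \eqref{eq:Kac_1} leans on measure preservation of $T_A$. Both arguments effectively require invertibility (yours for the forward images $T^i(A_n)$, the paper's because a $\Z$-action is by definition invertible), and your natural-extension remark is the standard fix for the non-invertible case.
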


Poincare's recurrence theorem says that for any probability preserving transformation the function $r_A:A \to \N$ given by \eqref{eq:r_A_def} is  finite almost everywhere on $A$.

The formula \eqref{eq:Kac_1} can be rewritten as 
\[
\frac{1}{\mu(A)}\int_A r_A(x) d\mu(x) = \frac{1}{\mu(A)}.
\]

For $x \in A$ the quantity $r_A(x)$ given by 
\eqref{eq:r_A_def} is the \emph{return time} to $A$ of the point $x$.
The left hand side of the equation is nothing but the expectation of the return time $r_A$ with respect to the conditional probability given $A$.

In this note we formulate and prove some generalizations of Kac's lemma for probability preserving actions of an arbitrary countable group.
We introduce the notion of an \emph{allocation} for a group action:

\begin{definition}\label{def:A_allocation}
Let $(X,\mathcal{B},\mu,T)$ be a  probability-preserving action of a countable group $\Gamma$, and let $A \in \mathcal{B}$ be a measurable set.  An \emph{$A$-allocation} is a measurable function $\kappa:X \to \Gamma$ such that $T_{\kappa(x)}(x) \in A$ $\mu$-almost surely. 

Note that for non-ergodic  probability-preserving actions, there exist sets $A$ of positive measure for which no $A$-allocation exists. For instance, any invariant set $A$ whose complement has positive measure does not admit an $A$-allocation.

Given an $A$-allocation $\kappa:X \to \Gamma$ and $x \in A$, define:
\begin{equation}\label{eq:B_kappa_def}
    B_\kappa(x) = \left\{ \gamma \in \Gamma~:~ \kappa(T_\gamma^{-1}(x))=\gamma \right\}.
\end{equation}
and $T_\kappa:X \to X$ by:
\[
T_\kappa(x) = T_{\kappa(x)}(x).
\]
\end{definition}

\begin{prop}\label{thm:Kacs_lemma_A_allocation}
Let $\Gamma$ be a countable group, let  $(X,\mathcal{B},\mu,T)$ be a probability-preserving $\Gamma$-action, let $A \in \mathcal{B}$ be a measurable set with $\mu(A)>0$.
Suppose that $\kappa:X \to \Gamma$ is an $A$-allocation.
Given a $\mathcal{B}$-measurable function  $f:X\to  [0,+\infty]$ define
\[
f_\kappa:A \to [0,+\infty]
\]
as follows:
\begin{equation}\label{eq:f_kappa_def}
    f_\kappa(x)= \sum_{y \in X ~:~ T_\kappa (y) = x} f(y).
\end{equation}
Then
\begin{equation}
    \int_A f_\kappa(x) d\mu(x) = \int_X f(x) d\mu(x).
\end{equation}

In particular, taking $f \equiv 1$ we have that
\begin{equation}\label{eq:A_alloc_exp_cell_size}
    \int_A | B_\kappa(x)| d\mu(x) = 1,
\end{equation}
where $B_\kappa(x)$ is given by \eqref{eq:B_kappa_def}.
\end{prop}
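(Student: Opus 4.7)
The plan is to partition $X$ according to the value of $\kappa$, push each piece forward into $A$ via the corresponding $T_\gamma$, and sum the resulting contributions.

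First, I would write $X = \bigsqcup_{\gamma \in \Gamma} X_\gamma$ (modulo $\mu$-null sets), where $X_\gamma := \kappa^{-1}(\gamma)$. By the defining property of an $A$-allocation we have $T_\gamma(X_\gamma) \subseteq A$ almost surely. Moreover, since $T_\gamma$ is an invertible measure-preserving transformation of $X$, it restricts to a measurable bijection of $X_\gamma$ onto its image $T_\gamma(X_\gamma)$. The key bookkeeping step is to identify this image: by the definition \eqref{eq:B_kappa_def} of $B_\kappa$,
\[
T_\gamma(X_\gamma) = \{\, y \in X : \kappa(T_\gamma^{-1}(y)) = \gamma\,\} = \{\, y \in A : \gamma \in B_\kappa(y)\,\}
\]
up to a null set.

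Next, I would decompose the integral of $f$ over $X$ as a sum over $\gamma$ and apply measure preservation:
\[
\int_X f\,d\mu \;=\; \sum_{\gamma \in \Gamma} \int_{X_\gamma} f\,d\mu \;=\; \sum_{\gamma \in \Gamma} \int_{T_\gamma(X_\gamma)} f \circ T_\gamma^{-1}\,d\mu,
\]
where in the second equality I substitute $y = T_\gamma(x)$ and use that $T_\gamma$ preserves $\mu$. Substituting the identification of $T_\gamma(X_\gamma)$ above, this becomes
\[
\sum_{\gamma \in \Gamma} \int_A \mathbbm{1}_{\{\gamma \in B_\kappa(y)\}}\, f(T_\gamma^{-1}(y))\,d\mu(y).
\]

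Finally, because $f \geq 0$, Tonelli's theorem permits exchanging the sum and the integral, yielding
\[
\int_A \sum_{\gamma \in B_\kappa(y)} f(T_\gamma^{-1}(y))\,d\mu(y).
\]
To finish, I would observe that the inner sum is exactly $f_\kappa(y)$: the fiber $\{z \in X : T_\kappa(z) = y\}$ consists precisely of the points $z = T_\gamma^{-1}(y)$ for $\gamma \in B_\kappa(y)$, by the definition of $B_\kappa$. The specialization $f \equiv 1$ then gives \eqref{eq:A_alloc_exp_cell_size}. The only subtle points to get right are the identification of the image $T_\gamma(X_\gamma)$ with $\{y \in A : \gamma \in B_\kappa(y)\}$ and the description of the fibers of $T_\kappa$ via $B_\kappa$; once those combinatorial identifications are in place, everything reduces to measure preservation together with a non-negative Fubini swap.
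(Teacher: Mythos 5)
Your proof is correct, but it is organized differently from the paper's. The paper routes the proposition through two auxiliary lemmas: it first shows (\Cref{lem:Kac_function_set_finite}, a Poincar\'e-recurrence-type statement) that the cells $B_\kappa(x)$ are almost surely finite, then enumerates the countably many finite subsets of $\Gamma$ and partitions $A$ according to the shape $B_\kappa(x)=B_n$ (\Cref{prop:A_allocation_partition}), producing a function $\phi$ for which the sets $T_\gamma^{-1}(\phi^{-1}(\{n\}))$, $\gamma \in B_n$, are pairwise disjoint and cover $X$ mod $\mu$; equality then follows from the integral identity of \Cref{lem:Kac_int_upperbound} together with the observation $f_\phi = f_\kappa$. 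You instead decompose the \emph{domain} into the level sets $X_\gamma = \kappa^{-1}(\gamma)$, where disjointness and covering of $X$ are automatic (they just express that $\kappa$ is an a.e.\ defined function), push each piece into $A$ by the measure-preserving map $T_\gamma$, and conclude by Tonelli. This is leaner: you never need to know that $B_\kappa(x)$ is finite, you need no enumeration of finite subsets, and the whole argument is self-contained. In effect the paper's decomposition by pairs $(n,\gamma)$ is a refinement of yours (recording the cell shape as well as the value of $\kappa$); the paper pays that extra bookkeeping because the resulting lemmas and the explicit sequence $(B_n)$ are reused later, in the proofs of \Cref{thm:Kac_function_group_action} and \Cref{prop:Kac_Zd_convex}, where the shapes themselves matter.

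One small point you should make explicit: to pass from $\sum_{\gamma \in B_\kappa(y)} f(T_\gamma^{-1}(y))$ to the sum over the fiber $\{z : T_\kappa(z)=y\}$ in \eqref{eq:f_kappa_def} (and, with $f \equiv 1$, to get $|B_\kappa(y)|$ in \eqref{eq:A_alloc_exp_cell_size}), you need the map $\gamma \mapsto T_\gamma^{-1}(y)$ to be \emph{injective} on $B_\kappa(y)$, not merely surjective onto the fiber; otherwise a single fiber point could be counted several times when the action is not free. This is immediate from \eqref{eq:B_kappa_def}: if $T_\gamma^{-1}(y) = T_{\gamma'}^{-1}(y) = z$ with $\gamma, \gamma' \in B_\kappa(y)$, then $\kappa(z) = \gamma$ and $\kappa(z) = \gamma'$, so $\gamma = \gamma'$. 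With that one-line remark added, your argument is complete.
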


As a special case,  if $(X,\mathcal{B},\mu,T)$ is an ergodic probability-preserving action of the group $\Z$ (namely, an \emph{ invertible} transformation) and $A \in \mathcal{B}$ is a set of positive measure then  
 the set $\bigcup_{n=1}^\infty T^{-n}(A)$ has  full measure whenever $A \in \mathcal{B}$ has positive measure, and so we can define an $A$-allocation 
$\kappa:X \to \Z$ given by
\[\kappa(x) = \min \left\{ n \ge 0~:~ T_n(x) \in A\right\}.\]
In this case $|B_\kappa(T_A(x))| =r_A(x)$ given by \eqref{eq:r_A_def}, 
where $T_A:A \to A$ is the \emph{first return transformation} given by
\[
T_A(x)= T^{r_A(x)}(x).
\]

Using the fact that the first return transformation $T_A:A \to A$ is measure-preserving, in this case \eqref{eq:A_alloc_exp_cell_size} reduces to Kac's formula \eqref{eq:Kac_1}.

A somewhat different variant of  Kac's lemma for ergodic probability preserving actions of $\Z^d$ was considered  
in  \cite{MR1412593} by Aaronson and Weiss: Given a probability-preserving $\Z^d$-action $(X,\mathcal{B},\mu,T)$  
and a subset $A \in \mathcal{B}$ with $\mu(A)>0$,
Aaronson and Weiss defined a \emph{Kac function} for the set $A$ to be a measurable function $\phi:A \to \N$ such that
\[   X = \bigcup_{n=1}^\infty \bigcup_{v \in B_n}T_v(\phi^{-1}(\{n\})) ~\mod \mu,
\]
and
\[
\int_A | B_{\phi(x)}| d\mu(x) < + \infty,
\]
where $B_n = [-n,n]^d \cap \Z^d$.
In this terminology, for the case $d=1$, Kac's lemma implies that the return time function $r_A:A \to \N$ is a Kac function. Aaronson and Weiss proved that for any ergodic  probability-preserving $\Z^d$-action $(X,\mathcal{B},\mu,T)$ and any  subset $A \in \mathcal{B}$ with $\mu(A)>0$, the product of $\Z^d$-action $(X,\mathcal{B},\mu,T)$ with a dyadic odometer admits a Kac function for the lift of the set $A$ to the product system \cite[Theorem 2.1]{MR1412593}.
Note that in \cite{MR1412593} the inequality $\int_A | B_{\phi(x)}| d\mu(x) < + \infty$ was stated in the equivalent form   $\int_A (\phi(x))^d d\mu(x) < +\infty$.

In fact, the proof given in \cite{MR1412593} shows that for every fixed $d \ge 1$ there is a constant $C_d \in (0,+\infty)$ so that any subset $A \in \mathcal{B}$ with $\mu(A)>0$ admits a Kac function with $\int_A | B_{\phi(x)}| d\mu(x) < C_d$. The proof in \cite{MR1412593} is written explicitly for the case $d=2$, where the constant obtained is $C_d=36$. In the case $d=1$, taking $B_n=[0,n-1] \cap \Z$, the statement of  Kac's lemma shows that the return time function is a Kac's lemma with an optimal constant  $C_1=1$. 


Along these lines, the following is a generalization of Kac's lemma for actions of arbitrary countable groups:

\begin{definition}\label{def:sweep_out}
   Let $(X,\mathcal{B},\mu,T)$ be a probability preserving action of a countable group $\Gamma$. A set $A\in \mathcal{B}$ is called a \emph{sweep-out set} if the orbit of almost every $x\in X$ intersects $A$. Equivalently, $A \in \mathcal{B}$ is a sweep-out set if 
   \[
   \mu\left(\bigcup_{\gamma \in \Gamma}T_{\gamma}(A)\right)=1.
   \]
\end{definition}

A probability preserving $\Gamma$-action   $(X,\mathcal{B},\mu,T)$ is  \emph{ergodic} exactly when any set $A \in \mathcal{B}$ with $\mu(A)>0$ is a sweep-out set.

\begin{thm}\label{thm:Kac_function_group_action}
Let $\Gamma$ be a countable group. Then there exists a sequence of finite subsets of $\Gamma$ $B_1,B_2,\ldots,B_n,\ldots \subset \Gamma$ with $1_\Gamma \in B_n$ for all $n$
such that for any   probability-preserving $\Gamma$-action $(X,\mathcal{B},\mu,T)$ and any  sweep-out set $A \in \mathcal{B}$,  there exists 
 a measurable function $\phi:A \to \N$ such that 
\begin{equation}\label{eq:X_partition}
 X = \biguplus_{n=1}^\infty \biguplus_{\gamma \in B_n}T_\gamma^{-1}(\phi^{-1}(\{n\})) ~\mod \mu,
\end{equation}
and
\begin{equation}\label{eq:int_Kac_function_is_1}
    \int_A |B_{\phi(x)}| d\mu(x) =1 
\end{equation}
\end{thm}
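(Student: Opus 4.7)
The plan is to construct a canonical $A$-allocation from a fixed enumeration of $\Gamma$ and then let \cref{thm:Kacs_lemma_A_allocation} supply both the Kac function $\phi$ and the integral identity. First I would fix an enumeration $\gamma_1=1_\Gamma,\gamma_2,\gamma_3,\ldots$ of $\Gamma$ and, independently, enumerate the (countable) family $B_1,B_2,\ldots$ of all finite subsets of $\Gamma$ that contain $1_\Gamma$. This is the sequence $(B_n)$ promised by the theorem; crucially it depends only on $\Gamma$, and not on the action, on $A$, or on the allocation to be built.

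Next, given the action $(X,\mathcal{B},\mu,T)$ and a sweep-out set $A$, I would define $\kappa:X\to\Gamma$ by the first-hit rule
\[
\kappa(x)=\gamma_{n(x)},\qquad n(x)=\min\{k\ge 1:T_{\gamma_k}(x)\in A\}.
\]
The sweep-out hypothesis makes $\kappa$ defined $\mu$-a.e., and $\kappa(x)=1_\Gamma$ whenever $x\in A$, so $1_\Gamma\in B_\kappa(x)$ for a.e.\ $x\in A$. By \cref{thm:Kacs_lemma_A_allocation}, $\int_A|B_\kappa(x)|d\mu(x)=1$, so $B_\kappa(x)$ is almost surely a finite subset of $\Gamma$ containing $1_\Gamma$. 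I then define $\phi:A\to\N$ by $\phi(x)=n$ iff $B_\kappa(x)=B_n$; then $|B_{\phi(x)}|=|B_\kappa(x)|$ and \eqref{eq:int_Kac_function_is_1} is immediate from \eqref{eq:A_alloc_exp_cell_size}.

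The remaining step is to verify the disjoint-union formula \eqref{eq:X_partition}. For covering, any $y\in X$ at which $\kappa$ is defined satisfies, writing $\gamma=\kappa(y)$ and $x=T_\gamma(y)\in A$, the identity $\kappa(T_\gamma^{-1}(x))=\kappa(y)=\gamma$, so $\gamma\in B_\kappa(x)=B_{\phi(x)}$ and $y=T_\gamma^{-1}(x)\in T_\gamma^{-1}(\phi^{-1}(\{\phi(x)\}))$. For disjointness, if $y$ belonged to both $T_\gamma^{-1}(\phi^{-1}(\{n\}))$ and $T_{\gamma'}^{-1}(\phi^{-1}(\{n'\}))$ with $\gamma\in B_n$ and $\gamma'\in B_{n'}$, then $\gamma\in B_n=B_\kappa(T_\gamma(y))$ forces $\kappa(y)=\gamma$ and similarly $\kappa(y)=\gamma'$, so $\gamma=\gamma'$ and then $n=n'$ follows automatically from $\phi(T_\gamma(y))$ being single-valued.

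No serious obstacle is foreseen: the theorem is essentially a bookkeeping translation of \cref{thm:Kacs_lemma_A_allocation}. The only conceptually delicate point is the separation of roles between the ambient enumeration $(B_n)$, which must be chosen once up front independently of $A$ and of the action, and the action-specific $\phi$ that records which $B_n$ equals $B_\kappa(x)$. Standard measure-theoretic care is needed to confirm that the exceptional null set on which $\kappa$ or $\phi$ is undefined is indeed a $\mu$-null set (a consequence of the finiteness of $|B_\kappa|$ a.e.\ via Proposition~\ref{thm:Kacs_lemma_A_allocation}), but this is routine.
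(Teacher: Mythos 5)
Your proposal is correct and follows essentially the same route as the paper: the same first-hit allocation $\kappa$ built from a fixed enumeration of $\Gamma$, and the same $\phi$ defined by matching $B_\kappa(x)$ against a pre-chosen enumeration of finite subsets, with \eqref{eq:X_partition} and \eqref{eq:int_Kac_function_is_1} obtained via \Cref{thm:Kacs_lemma_A_allocation} rather than via the paper's intermediate \Cref{lem:Kac_int_upperbound} and \Cref{prop:A_allocation_partition}, which amounts to the same thing. If anything you are slightly more careful than the paper: you enumerate only the finite subsets containing $1_\Gamma$ and arrange $\gamma_1=1_\Gamma$ so that the stated requirement $1_\Gamma\in B_n$ is genuinely met, and you verify the disjointness in \eqref{eq:X_partition} explicitly.
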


\Cref{eq:int_Kac_function_is_1} is a strengthening of the condition $\int_A | B_{\phi(x)}| d\mu(x) < + \infty$  in the Aaronson-Weiss $\Z^d$ random Kac inequality as in \cite[Theorem 2.1]{MR1412593}. However, \Cref{thm:Kac_function_group_action} does not recover the 
 Aaronson-Weiss $\Z^d$ random Kac inequality, since the sequence of sets $B_n$ given by the proof cannot in general be taken to be  $\Z^d$-boxes. 
 
Under additional assumptions on the group $\Gamma$, it is possible to say more about the sequence of sets $(B_n)_{n=1}^\infty$.

\begin{definition}
We say that  $B\subset\Z^d$ is \emph{almost convex} if $B$ contains every integer point in the interior of its convex hull.
\end{definition}

\begin{prop}\label{prop:Kac_Zd_convex}
 When $\Gamma= \Z^d$, in the statement of \Cref{thm:Kac_function_group_action} it is possible to choose the set sequence of subsets $(B_n)_{n=1}^\infty$ to be finite almost convex subsets in $\Z^d$ that contain zero.
\end{prop}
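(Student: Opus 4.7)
The plan is to realize the Kac function from \Cref{thm:Kac_function_group_action} via a Voronoi-style allocation based on Euclidean distance on $\Z^d$, and to deduce almost convexity of the resulting cells directly from convexity of the Voronoi cells themselves. The argument proceeds in three stages: define the allocation, analyze its cells geometrically, and then assemble the universal sequence $(B_n)_{n=1}^\infty$.

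Fix a linear order $\prec$ on $\Z^d$ (for tie-breaking) once and for all. Given a probability-preserving $\Z^d$-action $(X,\mathcal{B},\mu,T)$ with sweep-out set $A$, set $S_x := \{v \in \Z^d : T_v x \in A\}$, which is $\mu$-a.s.\ nonempty. Define $\kappa(x)$ to be the $\prec$-minimal element of $\arg\min_{v \in S_x} \|v\|_2$. The minimum is attained because each Euclidean ball meets $\Z^d$ in a finite set, so $\kappa$ is a measurable $A$-allocation; moreover $\kappa(y)=0$ for every $y \in A$, so $0 \in B_\kappa(y)$.

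For $y \in A$ and $\gamma \in \Z^d$, writing $z = T_{-\gamma}y$ and noting $S_z = \gamma + S_y$, the condition $\kappa(z)=\gamma$ amounts to $\|\gamma\|_2 \le \|\gamma+w\|_2$ for every $w \in S_y$, with boundary equalities resolved by $\prec$. Geometrically, $B_\kappa(y)$ is the intersection of $\Z^d$ with the closed Voronoi cell $V_y \subset \R^d$ of the origin in the Voronoi tessellation with sites $-S_y$, with boundary points included or excluded according to $\prec$. Since $V_y$ is convex, $\operatorname{conv}(B_\kappa(y)) \subseteq V_y$, and any integer point in $\operatorname{int}(\operatorname{conv}(B_\kappa(y)))$ lies in $\operatorname{int}(V_y)$, where the defining Voronoi inequalities are \emph{strict} and tie-breaking becomes irrelevant; such a point therefore automatically belongs to $B_\kappa(y)$. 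This shows $B_\kappa(y)$ is an almost convex subset of $\Z^d$ containing $0$.

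The remaining step is to invoke \Cref{thm:Kacs_lemma_A_allocation} with $f \equiv 1$, yielding $\int_A |B_\kappa(y)|\,d\mu(y) = 1$ and in particular $|B_\kappa(y)|<\infty$ $\mu$-a.s. This is the cleanest part of the argument, and it is precisely here that one sees that the proof has no real obstacle: the only step that might otherwise require serious work --- proving directly that $0 \in \operatorname{int}(\operatorname{conv}(S_y))$ a.s., so that $V_y$ is bounded --- is bypassed entirely, since \Cref{thm:Kacs_lemma_A_allocation} delivers cell-finiteness a posteriori from the identity $\int_A |B_\kappa|\,d\mu = 1$. Since there are only countably many finite almost convex subsets of $\Z^d$ containing $0$, enumerate them as $(B_n)_{n=1}^\infty$ and set $\phi(y):=n$ iff $B_\kappa(y)=B_n$ (and arbitrarily on the $\mu$-null exceptional set). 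Then \eqref{eq:X_partition} is the statement that $\kappa$ partitions $X$ into the cells $\{T_{-\gamma}(y):y\in A,\,\gamma\in B_\kappa(y)\}$, and \eqref{eq:int_Kac_function_is_1} is exactly the Kac identity just applied.
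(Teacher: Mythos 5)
Your proof is correct and takes essentially the same route as the paper's: the minimal-Euclidean-norm allocation with a fixed tie-breaking order, identification of each cell $B_\kappa(y)$ with the integer points of a (boundary-perturbed) Voronoi cell sandwiched between the open and closed cells, almost convexity deduced from convexity of the Voronoi cell, and a.s.\ finiteness of cells plus the integral identity supplied by \Cref{thm:Kacs_lemma_A_allocation} rather than by direct geometry. The only cosmetic differences are that the paper encodes the tie-breaking through a bijection $g:\N\to\Z^d$ with $n\mapsto\|g(n)\|$ non-decreasing, and enumerates only the cells of positive measure instead of all finite almost convex sets containing $0$; neither changes the substance.
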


In the case $\Gamma= \Z$, an almost convex subset is  always of the form $I \cap \Z$, where $I \subset \R$ is an interval.
So \Cref{prop:Kac_Zd_convex} essentially recovers Kac's lemma for this case.

We now turn to formulate a version of Kac's lemma for probability-preserving equivalence relations.

Given a  probability space  $(X,\mathcal{B},\mu)$,
A \emph{countable $\mu$-preserving equivalence relation} a is  binary  equivalence relation $\mathcal{R} \in \mathcal{B} \otimes \mathcal{B}$ with countable or finite equivalence classes, meaning that
for every $x \in X$ the set
\[
\left\{ y \in X~:~  (x,y) \in \mathcal{R}\right\}
\]
is at most countable, such that for any $A \in \mathcal{B}$ and any $\mathcal{B}$-measurable bijection $f:X \to X$ whose graph $\{(x,f(x)~:~ x \in X\}$ is contained in $\mathcal{R}$ we have $\mu(A)=\mu(f^{-1}(A))$. In particular, for any  such $f:X\to X$ we have that $A \subseteq X$ is $\mu$-null if and only if $f^{-1}(A)$ is $\mu$-null.

The typical example is the orbit equivalence relation $\mathcal{R}_T$ of a probability-preserving action $(X,\mathcal{B},\mu,T)$ of a countable group $\Gamma$, given by
\[ (x,y)\in \mathcal{R}_T ~\Leftrightarrow ~ \exists \gamma \in \Gamma \mbox{ s.t } y= T_\gamma(x).\]
We refer to Feldman and Moore's paper \cite{MR578656} for background on the ergodic theory of equivalence relations.
Using classical results on the ergodic theory of equivalence relations and \Cref{thm:Kacs_lemma_A_allocation}, we obtain the following.

\begin{thm}[Kac's lemma for probability preserving equivalence relations]\label{thm:Kac_orbit_relation}
    Let $(X,\mathcal{B},\mu)$ be a standard probability space and let $\mathcal{R} \subset X \times X$ be a countable $\mu$-preserving equivalence relation.
    Suppose that $\tau:X \to X$ is a $\mathcal{B}$-measurable function such that $(x,\tau(x)) \in \mathcal{R}$ $\mu$-almost everywhere.
    For $f:X \to [0,+\infty]$ define 
    $f_\tau:X \to [0,+\infty]$ by
    \begin{equation}\label{eq:f_tau_def}
    f_\tau(x) = \sum_{y \in \tau^{-1}(\{x\})}f(y).
    \end{equation}
    Then
    \begin{equation}\label{eq:f_tau_Kac}
    \int_X f(x) d\mu(x)= \int_X f_\tau(x) d\mu(x).    
    \end{equation}
    In particular, taking $f \equiv 1$ we get:
    \begin{equation}\label{eq:preimages_exp}
        \int_X \left| \tau^{-1}( \{x\}) \right| d\mu(x)=1.
    \end{equation}
\end{thm}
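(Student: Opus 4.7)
The plan is to reduce the statement to Proposition \ref{thm:Kacs_lemma_A_allocation} applied to $A = X$, by realizing the equivalence relation $\mathcal{R}$ as the orbit equivalence relation of a suitable group action and then producing a measurable allocation $\kappa$ with $T_\kappa = \tau$.

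First I would invoke Feldman--Moore's theorem \cite{MR578656}: since $(X,\mathcal{B},\mu)$ is a standard probability space and $\mathcal{R}$ is a countable $\mu$-preserving equivalence relation, there exists a countable group $\Gamma$ and a $\mu$-preserving action $T$ of $\Gamma$ on $X$ whose orbit equivalence relation $\mathcal{R}_T$ agrees with $\mathcal{R}$ modulo a $\mu$-null set. Discarding this null set, we may assume $\mathcal{R} = \mathcal{R}_T$ pointwise on a conull $T$-invariant subset and restrict attention there.

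Next I would build a measurable $\kappa : X \to \Gamma$ satisfying $T_{\kappa(x)}(x) = \tau(x)$ almost surely. Enumerate $\Gamma = \{\gamma_1,\gamma_2,\ldots\}$ and set
\[
\kappa(x) = \gamma_{n(x)}, \qquad n(x) = \min\{ n \ge 1 \,:\, T_{\gamma_n}(x) = \tau(x)\}.
\]
Since $(x,\tau(x)) \in \mathcal{R}_T$ almost surely, the set on the right is nonempty almost everywhere, so $\kappa$ is well-defined on a conull set. Its measurability follows from the measurability of $\tau$ and of each $T_{\gamma_n}$. Viewing $\kappa$ as an $X$-allocation (that is, an $A$-allocation with $A = X$), the defining condition $T_\kappa(x) = T_{\kappa(x)}(x) \in A$ is automatic, and moreover $T_\kappa = \tau$ almost everywhere by construction.

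Finally, I would apply Proposition \ref{thm:Kacs_lemma_A_allocation} with $A = X$ and the allocation $\kappa$ just constructed. For any measurable $f : X \to [0,+\infty]$, the definition \eqref{eq:f_kappa_def} of $f_\kappa$ then coincides almost everywhere with the definition \eqref{eq:f_tau_def} of $f_\tau$, since both sum $f$ over the fiber of the same map. The proposition thus yields $\int_X f_\tau \, d\mu = \int_X f\, d\mu$, which is \eqref{eq:f_tau_Kac}; the specialization $f \equiv 1$ gives \eqref{eq:preimages_exp}. The only nontrivial ingredient is the appeal to Feldman--Moore; everything else is routine bookkeeping, so I do not foresee a genuine obstacle.
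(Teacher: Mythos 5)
Your proof is correct and takes essentially the same route as the paper: Feldman--Moore realization of $\mathcal{R}$ as the orbit relation of a countable group action, the identical enumeration-based construction of the allocation $\kappa$ with $T_\kappa = \tau$, and an application of \Cref{thm:Kacs_lemma_A_allocation}. The only difference is that you take $A = X$ where the paper takes $A = \tau(X)$; this is harmless (the allocation condition is vacuous for $A=X$, and $f_\kappa$ vanishes off $\tau(X)$ anyway) and in fact spares you the paper's short argument that the image $\tau(X)$ is measurable.
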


\Cref{eq:preimages_exp} says that the expected number of preimages of a point under a self-map that respects a countable $\mu$-preserving equivalence relation is exactly one,  consistently with the common intuition.

Note that when $(X,\mathcal{B},\mu)$ is a \emph{finite} probability space, \Cref{eq:preimages_exp}  is  completely trivial: This case amounts to the trivial observation  that for any function from a finite set $X$ to itself, the expected number of preimages of a uniformly chosen random element of $X$ is equal to $1$.

Also, in the special case where $\tau:X \to X$ is \emph{injective}, \Cref{eq:f_tau_Kac} reduces to the statement that $\tau$ is a $\mu$-preserving transformation, recovering the well-known fact that any element of the full-group of a probability-preserving equivalence relation $\mathcal{R}$ is itself a probability-preserving transformation.

In the special case where $T:X \to X$ is an invertible ergodic probability-preserving transformation of $(X,\mathcal{B},\mu)$, $A \in \mathcal{B}$ is a set of positive measure  and 
$\tau:X \to X$ is the map that sends $x \in X$ to $T^{-n_A(x)}(x)$, where $n_A(x)$ is the smallest positive integer $m$ such that $T^{-m}(x) \in A$, we have that 
\[
|\tau^{-1}(\{x\})| = \begin{cases}
    r_A(x) & x\in A\\
    0 & x \not \in A,
\end{cases}
\]
and so \Cref{eq:preimages_exp} reduces to \Cref{eq:Kac_1}.

\section{Proof of the generalized Kac lemma for $A$-allocations}
In this section we prove \Cref{thm:Kacs_lemma_A_allocation}.
We begin with the following basic lemma:

\begin{lemma}\label{lem:Kac_int_upperbound}
Let  $(X,\mathcal{B},\mu,T)$ be a  probability-preserving action of a countable group $\Gamma$, let $B_1,\ldots,B_n,\ldots \subset \Gamma$ be a sequence of finite subsets of $\Gamma$ and let $A \in \mathcal{B}$ be a measurable set.  
Suppose that $\phi:A \to \N$ is a measurable function such that 
for every $n,m \in \N$, $\gamma \in B_n$ and $\tilde \gamma \in B_m$ we have 
\[ T_\gamma^{-1}(\phi^{-1}(\{n\})) \cap T_{\tilde \gamma}^{-1}(\phi^{-1}(\{m\})) = \emptyset,\]
unless $m=n$ and $\gamma=\tilde \gamma$.
Given a non-negative measurable function $f:X \to [0,\infty]$ define $f_\phi:A \to [0,\infty]$  by

\begin{equation}\label{eq:f_phi_def}
    f_\phi(x) = \sum_{\gamma \in B_{\phi(x)}} f(T_\gamma(x)).
\end{equation}
Then 
\[
\int_A f_\phi(x) d\mu(x) \le \int_X f(x) d\mu(x).
\]
If furthermore the union of the sets $T_\gamma^{-1}(\phi^{-1}(\{n\}))$ over all $n \in \N$ and $\gamma \in B_n$ is a set of full measure (namely if \eqref{eq:X_partition} holds), then 
\begin{equation}\label{eq:f_phi_int}
    \int_A f_\phi(x) d\mu(x) = \int_X f(x) d\mu(x).
\end{equation}
\end{lemma}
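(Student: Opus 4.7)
The plan is a direct unpacking: I would expand the definition of $f_\phi$, pass the integral through the resulting double sum using Tonelli's theorem (justified by $f \ge 0$), apply $\mu$-preservation of each $T_\gamma$ through a change of variables, and then exploit the disjointness hypothesis to collapse everything into an integral of $f$ over the union appearing in \eqref{eq:X_partition}.

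Concretely, I would partition $A$ into the level sets $A_n := \phi^{-1}(\{n\})$, so that by the definition of $f_\phi$ and Tonelli,
\[
\int_A f_\phi \, d\mu \;=\; \sum_{n=1}^\infty \sum_{\gamma \in B_n} \int_{A_n} f(T_\gamma(x))\, d\mu(x).
\]
Since each $T_\gamma$ is a $\mu$-preserving bijection of $X$, the natural change of variables rewrites each summand as $\int_X f \cdot \mathbf{1}_{T_\gamma^{-1}(A_n)} \, d\mu$, with the preimage $T_\gamma^{-1}(A_n)$ matching the format of both the disjointness hypothesis and \eqref{eq:X_partition}. By the hypothesis, the family $\{T_\gamma^{-1}(A_n) : n\in\N,\, \gamma\in B_n\}$ is pairwise disjoint modulo $\mu$, so countable additivity applied to the non-negative function $f$ collapses the double sum into $\int_U f \, d\mu$, where $U := \biguplus_{n=1}^\infty \biguplus_{\gamma \in B_n} T_\gamma^{-1}(A_n)$. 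Since $U \subseteq X$ we immediately obtain $\int_A f_\phi \, d\mu \le \int_X f \, d\mu$, with equality precisely when $U$ has full measure, i.e., exactly under the covering assumption \eqref{eq:X_partition}.

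The main obstacle I anticipate is essentially nonexistent; the argument is pure bookkeeping built from Tonelli, the $\mu$-invariance of each $T_\gamma$, and the stated disjointness/covering conditions. The only point that demands any attention is orienting the change of variables so that the sets produced after substitution coincide with the preimages $T_\gamma^{-1}(A_n)$ appearing in the hypothesis, rather than with their images under $T_\gamma$; once this small bit of bookkeeping is correct, both the inequality and the equality case fall out simultaneously from a single application of countable additivity.
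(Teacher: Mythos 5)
Your proof is correct and takes essentially the same route as the paper's: decompose $A$ into the level sets $\phi^{-1}(\{n\})$, pass the integral through the double sum by Tonelli, use the $\mu$-invariance of each $T_\gamma$ to move the integration onto a disjoint family of sets, and invoke disjointness for the inequality and the full-measure covering \eqref{eq:X_partition} for the equality (you merely run the chain starting from $\int_A f_\phi\,d\mu$ while the paper starts from $\int_X f\,d\mu$). One small caveat: under the standard convention the change of variables gives $\int_{A_n} f(T_\gamma(x))\,d\mu(x)=\int_{T_\gamma(A_n)}f\,d\mu$ (the \emph{image}, not the preimage $T_\gamma^{-1}(A_n)$), so the hypothesis matches only after replacing each $B_n$ by $B_n^{-1}$ or defining $f_\phi$ via $f(T_\gamma^{-1}(x))$ -- but this inverse-convention mismatch is already present in the lemma's statement and in the paper's own proof, so you inherit it rather than introduce it.
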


\begin{proof}
We can rewrite the function $f_\phi:A \to [0,+\infty]$ as follows:
\[
f_\phi(x) = \sum_{n=1}^\infty  1_{\phi^{-1}(\{n\})}(x) \sum_{\gamma \in B_n} f(T_\gamma(x)).
\]
Since the sets  $T_\gamma^{-1}(\phi^{-1}(\{n\}))$ are pairwise disjoint, we have 
\[
\int_X f(x)d\mu(x) \ge \sum_{n=1}
^\infty \sum_{\gamma \in B_n} \int_{T_\gamma^{-1}(\phi^{-1}(\{n\}))}f(x) d\mu(x) \]    
Since $T_\gamma:X \to X$ is $\mu$-preserving, we have that
\[
\int_{T_\gamma^{-1}(\phi^{-1}(\{n\})} f(x) d\mu(x) = 
\int_{\phi^{-1}(\{n\})}f(T_\gamma(x)) d\mu(x).
\]
It follows that
\[
\int_X f(x)d\mu(x) \ge \sum_{n=1}^\infty\sum_{\gamma \in B_n} \int_{\phi^{-1}(\{n\})}f(T_\gamma(x)) d\mu(x) =
\int_A f_\phi(x) d\mu(x),
\]
where in the last inequality we used that $A= \biguplus_{n=1}^\infty \phi^{-1}(\{n\})$.
This proves that $\int_A f_\phi(x) d\mu(x) \le \int_X f(x) dx$.
If furthermore \eqref{eq:X_partition} holds, then the inequality in the first displayed equation of the proof is an equality, and so in this case 
$\int_A f_\phi(x) d\mu(x) = \int_X f(x) dx$.
\end{proof}

The following lemma is a variation of Poincare's recurrence theorem, namely the statement that the first return time $r_A$ is finite almost-everywhere on $A$:
\begin{lemma}\label{lem:Kac_function_set_finite}
    Let  $(X,\mathcal{B},\mu,T)$ be a  probability-preserving action of a countable group $\Gamma$, let $B_1,\ldots,B_n,\ldots \subset \Gamma$ be a sequence of  subsets of $\Gamma$ and let $A \in \mathcal{B}$ be a measurable set.  
Suppose that $\phi:A \to \N$ is a measurable function such that the sets 
\[ T_\gamma^{-1}(\phi^{-1}(\{n\})),~ n \in \N,~ \gamma \in B_n\]
are pairwise disjoint.
Then for any $n \in \N$ we have
\[\mu(\{ x \in A~:~ |B_{\phi(x)}| \ge n\}) \le \frac{1}{n}.\]
In particular, if $|B_n|=+\infty$ then $\mu(\phi^{-1}(n))= 0$.
\end{lemma}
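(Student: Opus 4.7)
The plan is to reduce the bound to a straightforward disjointness-plus-measure-preservation computation. First I would set $S_n := \{x \in A : |B_{\phi(x)}| \ge n\}$ and decompose it according to the value of $\phi$, writing
\[
S_n = \biguplus_{m\,:\,|B_m|\ge n}\phi^{-1}(\{m\}).
\]
For each $m$ appearing in this union, the set $B_m$ has at least $n$ elements, so I can pick $n$ distinct elements $\gamma^{(m)}_1,\ldots,\gamma^{(m)}_n \in B_m$ (any measurable rule will do, for example listing $\Gamma$ and taking the first $n$ elements of $B_m$ in that order).

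Next I would consider the countable collection of sets
\[
\bigl\{\, T_{\gamma^{(m)}_i}^{-1}(\phi^{-1}(\{m\})) \,:\, m \text{ with } |B_m|\ge n,\ 1\le i\le n\,\bigr\}.
\]
By the standing hypothesis on $\phi$, these sets are pairwise disjoint (different $m$ give disjointness across blocks, different $\gamma^{(m)}_i$ within a block give disjointness inside a block). Since $T_{\gamma^{(m)}_i}$ is $\mu$-preserving, each has measure $\mu(\phi^{-1}(\{m\}))$. Summing over all of them and using $\mu(X)=1$ gives
\[
1 \ge \sum_{m\,:\,|B_m|\ge n}\sum_{i=1}^n \mu\bigl(\phi^{-1}(\{m\})\bigr) = n\sum_{m\,:\,|B_m|\ge n}\mu\bigl(\phi^{-1}(\{m\})\bigr) = n\,\mu(S_n),
\]
which yields $\mu(S_n)\le 1/n$ as desired.

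For the final clause, if $|B_n|=+\infty$ then every $x\in\phi^{-1}(\{n\})$ satisfies $|B_{\phi(x)}|=+\infty\ge k$ for every $k\in\N$, so $\phi^{-1}(\{n\})\subseteq S_k$ for all $k$; the bound $\mu(S_k)\le 1/k$ then forces $\mu(\phi^{-1}(\{n\}))=0$. I do not anticipate any real obstacle: the only point to watch is that the selection of the $\gamma^{(m)}_i$ is performed at the level of the countable index set $\N$ rather than pointwise on $X$, so no measurable-selection issue arises.
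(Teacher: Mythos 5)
Your proof is correct, but it takes a different route from the paper's. The paper proves this lemma in two lines: it applies \Cref{lem:Kac_int_upperbound} with $f \equiv 1$ to get the integral bound $\int_A |B_{\phi(x)}|\, d\mu(x) \le 1$, and then invokes Markov's inequality. You instead bypass both the integral bound and Markov: by truncating each block --- selecting only $n$ distinct elements $\gamma^{(m)}_1,\ldots,\gamma^{(m)}_n$ from each $B_m$ with $|B_m| \ge n$ --- you turn the statement into a bare counting argument, where disjointness and measure preservation immediately give $n\,\mu(S_n) \le \mu(X) = 1$. The underlying mechanism (pairwise disjointness of the sets $T_\gamma^{-1}(\phi^{-1}(\{m\}))$ plus invariance of $\mu$) is the same in both arguments, but the packaging differs, and your version has a small technical advantage: \Cref{lem:Kac_int_upperbound} is stated for \emph{finite} sets $B_n$, whereas the present lemma allows arbitrary (possibly infinite) subsets, so the paper's citation of that lemma requires the reader to observe that its proof goes through for countable $B_n$ when $f \ge 0$. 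Your truncation sidesteps this entirely, since you only ever manipulate finitely many group elements per block; the cost is that you redo the disjointness computation from scratch rather than reusing the machinery already established, which is what the paper's shorter proof buys by quoting \Cref{lem:Kac_int_upperbound}.
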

\begin{proof}
    Applying \Cref{lem:Kac_int_upperbound} with  $f \equiv 1$ we have 
    $ \int_A |B_{\phi(x)}|=1$. 
So by Markov inequality we have
\[\mu(\{ x \in A~:~ |B_{\phi(x)}| \ge n\}) \le \frac{1}{n}.\]
\end{proof}

To relate the previous statement directly  with $A$-allocations we have the following:
\begin{lemma}\label{prop:A_allocation_partition}
    Let $\Gamma$ be a countable group. Then there exists a sequence of finite subsets $B_1,B_2,\ldots,B_n,\ldots \subset \Gamma$ so that for any
      probability-preserving action $(X,\mathcal{B},\mu,T)$ of $\Gamma$, any  sweep-out set $A \in \mathcal{B}$ and any  $A$-allocation $\kappa:X \to \Gamma$  we have
\begin{equation}\label{eq:A_partition_B_kappa}
A= \biguplus_{n=1}^\infty \left\{ x \in A~:~  B_\kappa(x) =  B_n \right\} \mod \mu, 
\end{equation}
where $B_{\kappa}(x) \subseteq \Gamma$ is given by \eqref{eq:B_kappa_def}

Moreover, the almost-everywhere defined function $\phi:A \to \N$ given by 
\begin{equation}\label{eq:phi_kappa_def}
\phi(x)=  n  \mbox{ if } B_\kappa(x) = B_n 
\end{equation}
satisfies \eqref{eq:X_partition}.
\end{lemma}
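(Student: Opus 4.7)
The plan is to take $B_1, B_2, \ldots$ to be an enumeration of \emph{all} finite subsets of the countable group $\Gamma$, so that the sequence depends only on $\Gamma$ and not on the particular action, sweep-out set $A$, or allocation $\kappa$. With this enumeration fixed, \Cref{thm:Kacs_lemma_A_allocation} applied with $f \equiv 1$ gives $\int_A |B_\kappa(x)|\, d\mu(x) = 1 < +\infty$, so $|B_\kappa(x)|$ is finite for $\mu$-almost every $x \in A$. Hence $B_\kappa(x)$ is almost surely a \emph{finite} subset of $\Gamma$, which must coincide with some $B_n$ in the enumeration, and grouping $x \in A$ by this index $n$ yields the partition \eqref{eq:A_partition_B_kappa}.

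For the partition \eqref{eq:X_partition} of $X$, I would work with $\phi^{-1}(\{n\}) = \{x \in A : B_\kappa(x) = B_n\}$ and verify both containment directions. \emph{Existence (coverage):} given $x$ in the full-measure set where $\kappa(x)$ and $T_\kappa(x)$ are well-defined with $T_\kappa(x) \in A$, set $\gamma := \kappa(x)$ and $y := T_\kappa(x) = T_\gamma(x)$; by the very definition \eqref{eq:B_kappa_def}, $\gamma \in B_\kappa(y)$, and letting $n$ be the index with $B_\kappa(y) = B_n$ gives $\gamma \in B_n$ together with $T_\gamma(x) = y \in \phi^{-1}(\{n\})$. \emph{Disjointness:} if also $x \in T_{\gamma'}^{-1}(\phi^{-1}(\{n'\}))$ with $\gamma' \in B_{n'}$, then $T_{\gamma'}(x) \in A$ with $B_\kappa(T_{\gamma'}(x)) = B_{n'} \ni \gamma'$, and reading \eqref{eq:B_kappa_def} in reverse forces $\kappa(T_{\gamma'}^{-1}(T_{\gamma'}(x))) = \gamma'$, i.e.\ $\kappa(x) = \gamma'$. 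Since $\kappa$ is a function, this fixes $\gamma' = \gamma$, which in turn pins down $n' = n$.

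There is no deep obstacle in this argument. The real content is \Cref{thm:Kacs_lemma_A_allocation}, which supplies the a.e.\ finiteness of $B_\kappa(x)$; the remainder is a careful unwinding of the definition \eqref{eq:B_kappa_def} to see that the group-element label $\gamma$ attached to a point $x$ in the partition of $X$ must be precisely $\kappa(x)$, which is exactly what makes the union disjoint.
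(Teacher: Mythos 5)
Your overall strategy coincides with the paper's: fix once and for all an enumeration $B_1,B_2,\ldots$ of \emph{all} finite subsets of $\Gamma$, show that $B_\kappa(x)$ is almost surely finite, and then obtain \eqref{eq:A_partition_B_kappa} and \eqref{eq:X_partition} by unwinding the definition \eqref{eq:B_kappa_def}. Indeed, your coverage and disjointness arguments are spelled out more carefully than in the paper, whose proof leaves disjointness implicit. However, there is one genuine logical flaw: you justify the key finiteness step --- which you yourself call ``the real content'' --- by invoking \Cref{thm:Kacs_lemma_A_allocation} with $f\equiv 1$. In the paper the implication runs the other way: the proof of \Cref{thm:Kacs_lemma_A_allocation} consists precisely of applying the present lemma to produce the partition \eqref{eq:X_partition} and then feeding that partition into \Cref{lem:Kac_int_upperbound}. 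So, read inside the paper's development, your argument is circular.

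The repair is straightforward, because finiteness needs only an upper bound on $\int_A |B_\kappa|\,d\mu$, not the exact identity \eqref{eq:A_alloc_exp_cell_size}, and such a bound is available \emph{before} this lemma: it is the content of \Cref{lem:Kac_function_set_finite} (which is what the paper cites here), and it can also be proved directly without first producing a countably-indexed partition. For $\gamma\in\Gamma$ set $A_\gamma=\{x\in A:\gamma\in B_\kappa(x)\}=\{x\in A:\kappa(T_\gamma^{-1}(x))=\gamma\}$. Since $T_\gamma$ preserves $\mu$,
\[
\mu(A_\gamma)=\mu\left(T_\gamma^{-1}(A_\gamma)\right)=\mu\left(\left\{z\in X~:~\kappa(z)=\gamma \mbox{ and } T_\gamma(z)\in A\right\}\right)\le\mu\left(\kappa^{-1}(\{\gamma\})\right),
\]
and the level sets $\kappa^{-1}(\{\gamma\})$, $\gamma\in\Gamma$, are pairwise disjoint, so by Tonelli
\[
\int_A |B_\kappa(x)|\,d\mu(x)=\sum_{\gamma\in\Gamma}\mu(A_\gamma)\le\sum_{\gamma\in\Gamma}\mu\left(\kappa^{-1}(\{\gamma\})\right)\le 1 .
\]
Hence $B_\kappa(x)$ is finite for $\mu$-almost every $x\in A$, with no appeal to \Cref{thm:Kacs_lemma_A_allocation}. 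With this substitution (or with \Cref{lem:Kac_function_set_finite} cited in place of \Cref{thm:Kacs_lemma_A_allocation}), your proof is correct and is essentially the paper's.
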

We remark that in the statement of \cref{prop:A_allocation_partition} the sweep-out property of the set $A$ is only used to assure the existence of an $A$-allocation. 
If $A$ is not a sweep-out set, there cannot exist an $A$-allocation because in that case the set $B:= X \setminus \bigcup_{\gamma \in \Gamma}T_{\gamma}(A)$ has positive measure.
In fact, as shown in the proof of \Cref{thm:Kac_function_group_action} below, any sweep-out set $A$ admits and $A$-allocation.

\begin{proof}
    We can partition the set $A$ according to the value of $B_\kappa(x)$:
    \[A = \biguplus_{F \subseteq \Gamma} \{ x \in A~:~ B_\kappa(x) =F\}.\]
    By \Cref{lem:Kac_function_set_finite}, $B_\kappa(x)$ is almost-surely a finite set.
    Since there are countably many finite subsets in $\Gamma$, we can enumerate them as $B_1,\ldots,B_n,\ldots$, and
    get \eqref{eq:A_partition_B_kappa}.
    Since $\kappa:X \to \Gamma$ is an $A$-allocation we have that $T_{\kappa(x)}(x) \in A$ almost surely so the function $\phi:A \to \N$ in the statement indeed satisfies \eqref{eq:X_partition}.
\end{proof}


\begin{proof}[Proof of \Cref{thm:Kacs_lemma_A_allocation}]
  Let $(X,\mathcal{B},\mu,T)$ be a probability-preserving $\Gamma$-action, $A \in \mathcal{B}$ with $\mu(A)>0$ and $\kappa:X \to \Gamma$ an $A$-allocation. By \Cref{prop:A_allocation_partition} there exists a sequence of finite subsets $B_1,B_2,\ldots,B_n,\ldots \subset \Gamma$ so that the function $\phi:A \to \N$ defined by \eqref{eq:phi_kappa_def} satisfies \eqref{eq:X_partition}.
  Hence by, \Cref{lem:Kac_int_upperbound}, for every measurable $f:X\to [0,+\infty]$  \Cref{eq:f_phi_int} holds, where $f_\phi:A \to [0,+\infty]$ is given by \Cref{eq:f_phi_def}.
  Observe that in this case $B_{\phi(x)}(x)=B_\kappa(x)$. It follows that $f_\phi =f_\kappa$, where $f_\kappa$ is given by \eqref{eq:f_kappa_def}.
  This completes the proof.
\end{proof}

\begin{proof}[Proof of \Cref{thm:Kac_function_group_action}]
    Let $\Gamma$ be a countable group.
    Since $\Gamma$ is countable, we can choose a bijection $g:\N \to \Gamma$ such that $g(1)=1_\Gamma$ (we follow the convention  $\N=\{1,2,3,\ldots\}$). 
    Let $B_1,\ldots,B_n,\ldots$ be a sequence of finite subsets of $\Gamma$ that satisfies the conclusion of \Cref{prop:A_allocation_partition}. Let 
     $(X,\mathcal{B},\mu,T)$ be a  probability-preserving action of the group $\Gamma$, let $A \in \mathcal{B}$ be a sweep-out set.
    By definition of a sweep-out set, the complement of $\bigcup_{\gamma \in \Gamma}T_\gamma(A)$ is null. So for $\mu$-almost every $x \in X$ there exists $\gamma \in \Gamma$ such that $T_\gamma(x) \in A$.
    Thus, we can define $\kappa:X \to \Gamma$ almost everywhere by:
    \[
    \kappa(x) = g(\ell(x)), 
    \]
    where $\ell:X \to \N$ is defined (almost everywhere) by:
    \[
    \ell(x) = \min \{ n \in \N~:~ T_{g(n)}(x) \in A\}.
    \]
    Then $\kappa:X \to \Gamma$ is an $A$-allocation.
    
    Note that $\kappa:X \to \Gamma$ is a measurable function because
    for every $\gamma \in \Gamma$ there exists a unique $j \in \N$ such that $g(j)=\gamma$ and so
    \[
    \kappa^{-1}(\{\gamma\})=\ell^{-1}(\{j\}) = T_{\gamma}^{-1}(A)\setminus \bigcup_{\ell=1}^{j-1}T^{-1}_{g(\ell)}(A) \in \mathcal{B}.
    \]
    
    Furthermore, because $g(1)=1_\Gamma$ we have that $\ell(x)=1$ for every $x \in A$ so $\kappa(x)=1_\Gamma$ for every $x \in A$.
    
    So by \Cref{prop:A_allocation_partition}
    the function $\phi:A \to \N$ given by \eqref{eq:phi_kappa_def} satisfies \eqref{eq:X_partition}. Hence by \Cref{lem:Kac_int_upperbound} we have that \eqref{eq:f_phi_int} holds for any $\mathcal{B}$-measurable function $f:X \to [0,+\infty]$. In particular, setting $f \equiv 1$, we get \eqref{eq:int_Kac_function_is_1}. Note that since $\kappa(x)=1_\Gamma$ for every $x \in A$, we have that $1_\Gamma \in B_{\phi(x)}$ for $\mu$-almost every $x \in X$, so we can arrange that $1_\Gamma \in B_n$ for every $n \in\N$ by modifying some of the $B_n$'s in a manner that does not change $B_{\phi(x)}$ outside a null set.
\end{proof}
\section{A Kac's lemma for $\Z^d$ with almost-convex shapes}

In this section we prove \Cref{prop:Kac_Zd_convex}.

Formally, proof proceeds by carefully selecting the bijection $g:\Gamma \to \N$ that appears in the proof of \Cref{thm:Kac_function_group_action}. 

\begin{proof}[Proof of \Cref{prop:Kac_Zd_convex}]
Let $(X,\mathcal{B},\mu,T)$ be a probability-preserving $\Z^d$-action.
Choose the bijection 
$g:\N \to \Gamma$ in the proof of \Cref{thm:Kac_function_group_action} so that
the function $n \mapsto \|g(n)\|$ is monotone non-decreasing, where $\|v\|$ denotes the Euclidean norm of $v \in \Z^d$.
The application of \Cref{prop:A_allocation_partition} inside the proof of \Cref{thm:Kac_function_group_action} shows that we can choose the sequence $B_1,\ldots,B_n \subset \Z^d$ to consist only of finite sets $F \subset \Z^d$ such that
\[
\mu \left( \left\{ x \in A~:~ B_\kappa(x) = F \right\}\right) >0,
\]
where for $x \in A$ the set $B_\kappa(x) \subseteq \Z^d$ is given by \eqref{eq:B_kappa_def}.
To complete the proof, we will show that the set $B_\kappa(x) \subseteq \Z^d$ is always almost convex, under the assumption that the function $n \mapsto \|g(n)\|$ is monotone non-decreasing.

Applying the formula for $\kappa:X \to \Gamma=\Z^d$ in the proof of \Cref{thm:Kac_function_group_action}
we see that for every $x \in A$ we have
\begin{equation}\label{eq:B_kappa_g}
B_\kappa(x) = \left\{ v \in \Z^d~:~  T_{g(n)}(T_v^{-1}(x)) \not \in A \mbox{ for all } n< g^{-1}(v) \right\}.    
\end{equation}
Define a total order $\prec$ on the group $\Z^d$ by 
\[
v_1 \prec v_2 \Leftrightarrow 
g^{-1}(v_1) <  g^{-1}(v_2).
\]
Then for every $x \in A$ \eqref{eq:B_kappa_g} can be rewritten as follows:
\[
B_\kappa(x)= \left\{ v \in \Z^d ~:~ v \preceq w+v \mbox{ for all } w \in W_x \right\}
\]
where for $x \in A$ the set $W_x \subseteq \Z^d$ is given by:

\[
W_x =\left\{w \in \Z^d~:~ T_w(x) \in A \right\}.
\]

For every $x \in A$ let:
\[
B^{<}(x) = \left\{ v \in \Z^d~:~ \|v\| < \|w+v\| ~\forall w \in W_x \setminus \{0\} \right\}.
\]
and
\[
B^{\le}(x) =  \left\{ v \in \Z^d~:~ \|v\| \le \|w+v\| ~\forall w \in W_x  \right\}.
\]

Because  $n \mapsto \|g(n)\|$ is non-decreasing we have that
\[
B^{<}(x) \subseteq B_\kappa(x) \subseteq B^{\le}(x).
\]

The reader can check that the set $B^{\le}(x)$ is  the set of integer points of the Voronoi cell of $0$ with respect to the Voronoi tiling of $\R^d$ associated with  the configuration $-W_x \subset \R^d$. The set $B^{<}(x)$ consists of the integer points in the interior of the corresponding Voronoi cell. Informally, the set   $B_\kappa(x)$ can be obtained by ``slightly perturbing'' the Voronoi tiling  associated with  the configuration $-W_x \subset \R^d$ so that no integer points lay on the boundary, and $B_{\kappa}(x)$ to be the perturbed cell of $0$.

The fact that  Voronoi tilings in Euclidean space consist of convex polygons is a classical fact in euclidean geometry.
If $v \in \Z^d$ is in the interior of the convex hull of $B^{\le}(x)$ then
it is certainly in the interior of the Voronoi cell so $v \in B^{<}(x)$.
It follows that any integer point in the the interior of the convex hull of $B_\kappa(x)$ is contained in $B_\kappa(x)$, so $B_{\kappa}(x)$ is almost convex.
As in the proof of \Cref{thm:Kac_function_group_action}, since $\kappa(x) =0$ for every $x \in A$, we have that $0 \in B_{\kappa}(x)$ for $\mu$-almost every $x \in X$.

This completes the proof of \Cref{prop:Kac_Zd_convex}.
\end{proof}


\section{Proof of  Kac's formula for probability preserving equivalence relations}


\begin{proof}[Proof of \Cref{thm:Kac_orbit_relation}]
 Let $(X,\mathcal{B},\mu)$ be a standard probability space and let $\mathcal{R} \subset X \times X$ be a countable $\mu$-preserving equivalence relation, and let $\tau:X \to X$ be a $\mathcal{B}$-measurable function such that $(x,\tau(x)) \in \mathcal{R}$ $\mu$-almost everywhere. As noted by Feldman and Moore \cite{MR578656}, there exists a countable group $\Gamma$ and a probability preserving $\Gamma$-action $(X,\mathcal{B},\mu,T)$ such that $\mathcal{R}$ is the orbit relation of $T$.
 Enumerate the elements of $\Gamma$ as 
 \[\Gamma = \{\gamma_1,\gamma_2,\ldots,\},\]
 and define $\kappa:X \to \Gamma$ by 
 \[
 \kappa(x) = \gamma_{m(x)} \mbox{ where } m(x)= \min\{ n \in \N~:~ T_{\gamma_n}(x) = \tau(x)\}.
 \]
As in the proof of \Cref{thm:Kac_function_group_action}, we see that $\kappa$ is a measurable function.

 Define $A= \tau(X)$.
 Then $A \in \mathcal{B}$ because
 \[
 A= \bigcup_{\gamma \in \Gamma} T_\gamma(\{ x\in X~:~\kappa(x) = \gamma\}),
 \]
 and
 $\kappa:X \to \Gamma$ is an $A$-allocation, and $\tau = T_\kappa$.
 Let  $f:X \to [0,+\infty]$ be a  $\mathcal{B}$-measurable function.
 Then 
 \[\int_X \sum_{y \in \tau^{-1}(\{x\})}f(y) d\mu(x)= \int_A f_\kappa(x) d\mu(x).\]
 By \Cref{thm:Kacs_lemma_A_allocation} we conclude that 
 \[
 \int_X \sum_{y \in \tau^{-1}(\{x\})}f(y) d\mu(x)= \int_X f(x) d\mu(x).
 \]

\end{proof}

\section{Countable generators for ergodic group actions}

In this section we use the existence and basic properties of $A$-allocations to provide a short proof that any ergodic probability-preserving action of a countable group $\Gamma$ on a standard probability space admits a countable generator. 

Actually, we prove the existence of finite generator in a more general setting. To deal with non-ergodic actions, we need the following lemma regarding the existence of countable partitions into sweep-out sets:


\begin{lemma}\label{lem:sweep_out_partition}
    Let $(X,\mathcal{B},\mu,T)$ be a probability preserving action of the countable group $\Gamma$ on a standard  probability space $(X,\mathcal{B},\mu)$. Assume that almost every point in $X$ has an infinite orbit. 
    Then for any $\epsilon >0$ there exists a countable collection of pairwise disjoint sweep-out subsets of  $X$, each of which has measure at most $\epsilon$.
\end{lemma}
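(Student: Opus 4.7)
The plan is to use the ergodic decomposition $\mu = \int_\Omega \mu_\omega\,d\nu(\omega)$ (with measurable projection $\pi\colon X\to \Omega$) to reduce the problem to a fibre-wise uniformization onto $[0,1]$. The starting observation is that a set $A\in\mathcal{B}$ is a sweep-out set if and only if $\mu_\omega(A)>0$ for $\nu$-almost every $\omega$: the saturation $\bigcup_{\gamma\in\Gamma}T_\gamma(A)$ is $\Gamma$-invariant, so ergodicity of $\mu_\omega$ forces its $\mu_\omega$-measure into $\{0,1\}$, with the value $1$ occurring exactly when $\mu_\omega(A)>0$, and this in turn is equivalent to the saturation having full $\mu$-measure.

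Since the set of points with finite orbit is $\Gamma$-invariant and null by hypothesis, for $\nu$-a.e.\ $\omega$ the ergodic measure $\mu_\omega$ is concentrated on infinite orbits and is therefore non-atomic. Using standardness of $(X,\mathcal{B})$, I would identify $X$ with a Borel subset of $[0,1]$ and set
\[
g(x) := F_{\pi(x)}(x), \qquad F_\omega(t) := \mu_\omega\big((-\infty,t]\big).
\]
The probability integral transform, applied on each non-atomic fibre, yields $g_*\mu_\omega = \mathrm{Leb}_{[0,1]}$ for $\nu$-a.e.\ $\omega$. Partitioning $[0,1]$ into countably many intervals $I_1,I_2,\ldots$ each of length at most $\epsilon$ and setting $A_i := g^{-1}(I_i)$ then produces the desired partition of $X$: we have $\mu(A_i) = |I_i| \le \epsilon$ and $\mu_\omega(A_i) = |I_i| > 0$ for $\nu$-a.e.\ $\omega$, so by the first paragraph each $A_i$ is a sweep-out set.

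The main technical hurdle is ensuring the joint measurability of $(\omega,t)\mapsto F_\omega(t)$, which is exactly what allows the fibre-wise cumulative distribution function to be read off a single Borel function on $X$. This is a Rokhlin-style measurable disintegration statement, classical for factors of standard probability spaces, and it is the one place where the standardness hypothesis of the lemma is genuinely invoked; the rest of the argument is quite soft.
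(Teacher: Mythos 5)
Your proof is correct and is essentially the paper's own argument in different notation: the paper's conditional measures $\mu(\cdot\mid\mathcal{I})$ with respect to the invariant $\sigma$-algebra are exactly your ergodic components $\mu_{\pi(x)}$, it derives non-atomicity of these measures from the infinite-orbit hypothesis in the same way, and its quantile functions $f_n$ slice $X$ by the conditional CDF just as your map $g(x)=F_{\pi(x)}(x)$ does, with the sweep-out property of the pieces deduced from the $\{0,1\}$-law for invariant sets rather than from ergodicity of the components. If anything, your version is tidier on one point: the paper's pieces have conditional measures $\epsilon/2^n$, which sum to $\epsilon<1$, so as written they do not quite partition all of $X$, whereas your preimages $g^{-1}(I_i)$ of a genuine interval partition of $[0,1]$ do.
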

\begin{proof}
    Since $(X,\mathcal{B},\mu)$ is a standard probability space, we can assume by  Carath{\'e}odory's theorem that $X=[0,1]$,  that $T$ is a $\Gamma$-action on $[0,1]$ by Borel automorphisms that $\mu$ is a  $\Gamma$-invariant Borel-measure on $[0,1]$ and that $\mathcal{B}$ is the $\mu$-completion of the Borel $\sigma$-algebra on $[0,1]$.
    Let $\mathcal{I} \subseteq \mathcal{B}$ denote the $\sigma$-algebra of $T$-invariant sets. Given $A \in \mathcal{B}$, the conditional probability $\mu(A \mid \mathcal{I})(x)$ is defined $\mu$-almost everywhere, and $x \mapsto \mu(A \mid \mathcal{I})(x)$ is a $\mu$-measurable  almost everywhere defined function.

Fix $0 < \epsilon < 1$.

Let 
\[
f_1(x) = \inf \{ t >0~:~ \mu( [0,t] \mid \mathcal{I})(x) \ge \epsilon/2\}.
\]
The almost-everywhere defined function $f_1:X \to [0,1]$ is $\mathcal{I}$-measurable, thus $\mathcal{B}$-measurable.

Since almost every point in $X$ has an infinite orbit,  we conclude that almost surely the conditional probability $\mu(\cdot \mid \mathcal{I})(x)$ has no atoms, and so  for almost every $x \in X$ the function $t \mapsto  \mu([0,t] \mid \mathcal{I})(x)$ is continuous. Thus $\mu([0,f_1(x)] \mid \mathcal{I})(x) = \epsilon/2$ for $\mu$-almost every $x \in X$. In particular, since $\epsilon /2 < 1$, it follows that $f_1(x) < 1$ almost everywhere.
Similarly, by induction  we can define for every $n \ge 2$ an almost everywhere defined measurable function  $f_{n}:X \to [0,1]$ by
\[
f_n(x) = \inf \{ t >f_{n-1}(x) ~:~ \mu( [f_{n-1}(x),t) \mid \mathcal{I})(x) \ge \epsilon/2^n\}.
\]
Similarly to the case $n=1$, using that $\mu(
\cdot 
\mid \mathcal{I})$ is nonatomic almost surely, we have that  $\mu$-almost everywhere
\[ 0 < f_1(x) < f_2(x) < \ldots < f_n(x) < \ldots < 1,\]
and also for  $\mu$-almost every $x \in X$ we have
\[ \mu\left( (f_{n-1}(x),f_n(x)] \mid \mathcal{I}\right)(x) = \frac{\epsilon}{2^n}. \]
Let
\[
A_1 = \{ x \in [0,1]~:~ x \le f_1(x)\}
\]
and for every $n \ge 2$
\[
A_n = \{ x \in [0,1]~:~ f_{n-1}(x) < x \le f_n(x)\}.
\]
Then $A_1,\ldots,A_n,\ldots$ are pairwise disjoint $\mathcal{B}$-measurable sets with $\mu(A_n) = \frac{\epsilon}{2^n}$.
It remains to check that each $A_n$ is a sweep-out set.

Indeed, since $\bigcup_{\gamma \in \Gamma}T_\gamma(A_n) \in \mathcal{I}$ it follows that $\mu$-almost everywhere we have

$\mu( \bigcup_{\gamma \in \Gamma}T_\gamma(A_n) \mid \mathcal{I}) (x)\in \{0,1\}$.

Since $\mu( \bigcup_{\gamma \in \Gamma}T_\gamma(A_n) \mid \mathcal{I}) (x) \ge \epsilon/2^n$ almost-everywhere, we conclude that indeed
 each $A_n$ is a sweep-out set.

\end{proof}

\begin{definition}
Let $(X,\mathcal{B},\mu,T)$ be a probability preserving action of the countable group $\Gamma$ on a standard probability space $(X,\mathcal{B},\mu)$.  
A  countable $\mu$-measurable partition $\mathcal{P} \subseteq \mathcal{B}$  is called a \emph{generator} for the action if 
the smallest $\mu$-complete $T$-invariant $\sigma$-algebra that contains $\mathcal{P}$ is equal to $\mathcal{B}$.
\end{definition}

\begin{thm}\label{thm:countable_generator_group_action}
Let $(X,\mathcal{B},\mu,T)$ be a probability preserving action of the countable group $\Gamma$ on a standard probability space $(X,\mathcal{B},\mu)$, so that almost every point in $X$ has an infinite orbit.  Then there exists a countable generator for the action $(X,\mathcal{B},\mu,T)$.
\end{thm}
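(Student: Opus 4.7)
I would build a countable generator by combining the sweep-out partition of \Cref{lem:sweep_out_partition} with an allocation-based coding, adapting Rokhlin's tower argument for $\mathbb{Z}$-actions to the group-action setting: the cells $\{T_\gamma^{-1}(y) : \gamma \in B_\kappa(y)\}$ of an $A$-allocation, indexed by $y \in A$, will play the role of Rokhlin towers. Concretely, apply \Cref{lem:sweep_out_partition} to obtain a sweep-out set $A \subseteq X$ of small positive measure (for instance, one class of a partition of $X$ into sweep-out sets of measure at most some fixed $\epsilon$), and construct an $A$-allocation $\kappa : X \to \Gamma$ following the recipe in the proof of \Cref{thm:Kac_function_group_action}, normalized so that $\kappa(x) = 1_\Gamma$ exactly on $A$. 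Since $(A, \mathcal{B}|_A, \mu|_A)$ is a standard probability subspace, fix a Borel injection $\psi : A \hookrightarrow \mathbb{N}^{\mathbb{N}}$ with coordinate functions $\psi_n$, together with an enumeration $g : \mathbb{N} \to \Gamma$, and define the candidate generator
\[
  \mathcal{P}(x) = \bigl(\kappa(x),\, \psi_{g^{-1}(\kappa(x))}(T_\kappa(x))\bigr) \in \Gamma \times \mathbb{N},
\]
which is a countable measurable partition of $X$.

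The verification would proceed as follows. From the $\Gamma$-translates of $\mathcal{P}$ one reads off, for each $\gamma \in \Gamma$, both the cell label $\kappa(T_\gamma x)$ and the $g^{-1}(\kappa(T_\gamma x))$-th coordinate of the base point $\psi(T_\kappa(T_\gamma x)) \in A$. By \Cref{prop:A_allocation_partition} the cell data recovers the $\kappa$-partition of the orbit $O_x$, while the sweep-out property of $A$ ensures that $T_\kappa(T_\gamma x)$ exhausts $A \cap O_x$ as $\gamma$ varies over $\Gamma$. Within the cell of a base point $y \in A \cap O_x$, the coordinates of $\psi(y)$ revealed are those indexed by $g^{-1}(B_\kappa(y))$, which is almost surely a finite set by \Cref{thm:Kacs_lemma_A_allocation}; however, since each orbit visits $A$ infinitely often under the infinite-orbit hypothesis, with a careful choice of $\psi$ and $g$ one arranges that across the full orbit every coordinate of $\psi(y)$ is eventually revealed for every $y \in A \cap O_x$. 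This pins down each base point by injectivity of $\psi$, hence the orbit itself, and finally the point $x$ via the cell-position data $\kappa(x)$.

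The main obstacle is precisely this orbit-wise aggregation: one must carry out the reassembly of $\psi$-coordinates measurably and verify that it separates almost every pair of distinct orbits, in spite of the a.s.\ finiteness of individual cells forced by Kac's lemma. The use of \Cref{lem:sweep_out_partition} for the choice of $A$ is what makes the construction compatible with the ergodic decomposition in the non-ergodic case, since its conditional-probability construction produces sweep-out sets that refine the $T$-invariant $\sigma$-algebra and so interact properly with every ergodic component.
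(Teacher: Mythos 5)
Your construction breaks down at exactly the step you yourself flag as ``the main obstacle,'' and the fix you hope for does not exist. In your scheme, the only points of the orbit that carry any information about $\psi(y)$ are the points of $y$'s own cell $\{T_\gamma^{-1}(y):\gamma\in B_\kappa(y)\}$, and they reveal precisely the coordinates of $\psi(y)$ indexed by $g^{-1}(B_\kappa(y))$. By \Cref{lem:Kac_function_set_finite} the set $B_\kappa(y)$ is almost surely finite, so for almost every base point $y$ only finitely many coordinates of $\psi(y)$ are ever revealed, no matter how the rest of the orbit is traversed: the claim that ``across the full orbit every coordinate of $\psi(y)$ is eventually revealed'' is false for \emph{every} choice of $\psi$ and $g$, because labels of points in other cells refer to other base points and say nothing about $\psi(y)$. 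Nor can an abstract Borel injection $\psi$ be rescued by cleverness: if the finite revealed data $\left\{ \bigl(g^{-1}(\gamma),\psi_{g^{-1}(\gamma)}(y)\bigr) : \gamma\in B_\kappa(y)\right\}$ determined $y$, you would have an almost-everywhere injection of $A$ into the countable set of finite subsets of $\N\times\N$, forcing $\mu|_A$ to be purely atomic; this is impossible, since under the infinite-orbit hypothesis an atom would make $\mu$ infinite. So identity-coding of base points through a single allocation cannot generate. To salvage the idea, the coordinates of $\psi$ would have to encode dynamically meaningful data (memberships of cell members in a generating sequence of sets) rather than an abstract name of $y$, together with an argument that this data, aggregated over all cells, reconstructs the point --- at which stage you are essentially forced into the paper's construction.

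The paper's proof sidesteps the aggregation problem entirely, and it uses the full strength of \Cref{lem:sweep_out_partition} rather than a single sweep-out set: fix $E_1,E_2,\ldots$ generating $\mathcal{B}$, take the \emph{pairwise disjoint} sweep-out sets $A_1,A_2,\ldots$, and choose one allocation $\kappa_n$ per $A_n$. Each point $x$ records the set $C_x=\{(n,\gamma): \gamma\in B_{\kappa_n}(x) \mbox{ and } T_\gamma^{-1}(x)\in E_n\}$; that is, a point of $A_n$ records only which members of its own $\kappa_n$-cell lie in $E_n$ --- one bit per cell member, and only for the single index $n$ with $x\in A_n$. Disjointness of the $A_n$ is what makes $C_x$ almost surely finite, so the sets $P_D=\{x: C_x=D\}$, over finite $D\subset\N\times\Gamma$, form a countable partition; and each $E_n$ is recovered exactly (mod $\mu$) as a countable union of $\Gamma$-translates of the sets $P_D$ with $D\ni(n,\gamma)$, since $x\in E_n$ if and only if $(n,\kappa_n(x))\in C_{T_{\kappa_n(x)}(x)}$. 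Nothing in this argument ever tries to identify a base point from the finite data attached to it --- which is precisely the trap your proposal falls into. Your instinct to combine \Cref{lem:sweep_out_partition} with allocations is the right starting point, but you need infinitely many disjoint bases, one per generating set, rather than one base carrying an abstract point-code.
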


\begin{proof}
    Let $E_1,\ldots,E_n,\ldots \in \mathcal{B}$ be a sequence of elements that generates the $\sigma$-algebra $\mathcal{B}$, in the sense that $\mathcal{B}$ is the smallest $\mu$-complete   $\sigma$-algebra containing $\{E_n\}_{n=1}^\infty$.
    We assume that the measure $\mu$ is continuous, otherwise by ergodicity it is supported on a finite orbit, in which case the existence of a countable partition  $\mathcal{P}$   is trivial.
    
    By \Cref{lem:sweep_out_partition} there exists a sequence of pairwise disjoint sets $A_1,A_2,\ldots,A_n,\ldots \in \mathcal{B}$ such that each $A_n$ is a sweep-out set.
    For every $n \in \N$, let $\kappa_n:X \to \Gamma$ be an $A_n$-allocation.
    For every $x \in X$, 
    consider the set $C_x \subseteq \N \times \Gamma $ given by
    \[
    C_x = \left\{ (n,\gamma) \in \N \times \Gamma  ~:~ \gamma \in B_{\kappa_n}(x) \mbox{ and } T^{-1}_\gamma(x) \in E_n  \right\},
    \]
    where $B_\kappa(x)$ is given by \eqref{eq:B_kappa_def}.

    We claim that $C_x$ is a finite set $\mu$-almost surely.
    Indeed,  $(n,\gamma) \in C_x$ implies that $x \in A_n$ and that $\gamma \in B_{\kappa_n}(x)$. Since $B_\kappa(x)$ is a finite set almost surely for any $A$-allocation, it follows that $C_x$ is finite almost surely.

    For every finite set $D \subset \N \times \Gamma$ let 
    \[
    P_D = \{ x \in X ~:~ C_x = D\}.
    \]
    Then $\mathcal{P}:= \{P_D\}_{D \Subset \N \times \Gamma}$ is a countable partition of $X$ minus a null set.
    
    For any $x \in X$ and $n \in \N$ we have that
    $x \in E_n$ if and only if $(n,\gamma) \in C_{T_\gamma(x)}$.
    It follows that
    \[
    E_n = \bigcup_{\gamma \in \Gamma} \bigcup_{D \ni (n,\gamma) }T_\gamma^{-1}(P_D) \mod \mu,
    \]
    where the inner union is over all finite subsets $D \subset \N \times \Gamma$ such that $(n,\gamma) \in D$.

    This proves that every set $E_n$ belongs to the smallest $\mu$-complete $\sigma$-algebra that contains $\mathcal{P}$, hence $\mathcal{P}$ is a generator for the action $(X,\mathcal{B},\mu,T)$

\end{proof}

\begin{cor}
A   probability-preserving action $(X,\mathcal{B},\mu,T)$ of a countable group  $\Gamma$ on a standard probability space admits a countable generator if and only if the set $X_0 \subseteq X$ of points in $X$ that have a finite $\Gamma$ orbit under the action $T$ is essentially countable (in the sense that it is the union of a countable set and a null set). In particular, an ergodic probability-preserving action of a countable group admits a countable generator.
\end{cor}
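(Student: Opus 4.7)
The plan is to establish the two directions of the equivalence separately; the ``in particular'' ergodic statement will then drop out immediately.

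First I would tackle the \emph{if} direction. Assuming $X_0$ is essentially countable, I split $X=X_0\sqcup X_1$ with $X_1:=X\setminus X_0$; this decomposition is $\Gamma$-equivariant because $X_0$ is $\Gamma$-invariant, and on $X_1$ almost every point has infinite orbit. Applying \Cref{thm:countable_generator_group_action} to the restricted action on $X_1$ gives a countable generator $\mathcal{P}_1$ of $\mathcal{B}|_{X_1}$. On the essentially countable piece $X_0$, the partition $\mathcal{P}_0$ into singletons (together with one null residual atom) already generates $\mathcal{B}|_{X_0}$. The concatenation $\mathcal{P}:=\mathcal{P}_0\cup\mathcal{P}_1$ is then a countable partition of $X$, and since $X_0$ and $X_1$ are unions of its atoms, it is a generator for the whole action.

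Next I would handle the \emph{only if} direction. Given a countable generator $\mathcal{P}=\{P_n\}_{n\in\N}$, I consider the essentially injective code map $c:X\to\N^\Gamma$ defined by $c(x)(\gamma)=n$ iff $T_\gamma x\in P_n$. For $x\in X_0$ with stabilizer $H:=\mathrm{Stab}(x)$ of finite index, the code $c(x)$ is $H$-invariant and therefore takes values in the \emph{countable} set $\N^{\Gamma/H}$. I then decompose $X_0=\bigsqcup_H X_0^H$ according to stabilizer, with $X_0^H=\{x\in X_0:\mathrm{Stab}(x)=H\}$. The step I expect to require the most care is precisely here, because a countable group may a priori have uncountably many finite-index subgroups (for example $\bigoplus_\N \Z/2\Z$); the rescue is the pigeonhole $\mu(X_0)\le 1$, which forces all but countably many $H$ to satisfy $\mu(X_0^H)=0$. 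On each of the remaining countably many pieces the essentially injective $c$ maps into a countable set, so $X_0^H$ is essentially countable, and a countable union delivers the same for $X_0$.

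The ergodic statement then follows at once: $X_0$ is $\Gamma$-invariant, so ergodicity forces $\mu(X_0)\in\{0,1\}$, and in either case ($X_0$ null, or the whole measure supported on a single finite orbit) $X_0$ is essentially countable, so the main equivalence supplies a countable generator.
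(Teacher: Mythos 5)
Your ``if'' direction and the ergodic consequence are correct, and they follow the paper's own route: restrict to the invariant set $X \setminus X_0$, where almost every orbit is infinite, apply \Cref{thm:countable_generator_group_action} there, and adjoin the singletons of the countable part of $X_0$.

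The ``only if'' direction has a genuine gap, located exactly at the step you flagged. The pigeonhole does show that at most countably many of the disjoint sets $X_0^H$ have positive measure, and each of \emph{those} is essentially countable (granting the standard fact that on a standard probability space a generator has an essentially injective code map $c$). But $X_0$ is the union of \emph{all} the stabilizer classes, and the residual piece $\bigcup \left\{ X_0^H ~:~ \mu(X_0^H)=0 \right\}$ is a union of possibly uncountably many null sets; such a union need not be null, and nothing in your argument prevents it from carrying positive measure. Essential injectivity of $c$ cannot close this hole, because the global target $\bigcup_H \N^{\Gamma/H}$ is uncountable: already for $\Gamma=\bigoplus_{\N}\Z/2\Z$ the indicators $1_H$ of the uncountably many index-two subgroups $H$ are pairwise distinct elements of it.

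In fact no repair is possible: the ``only if'' direction is false, for this very group. Let $\Gamma=\bigoplus_{n\in\N}\Z/2\Z$, let $X=\left(\prod_{n\in\N}\Z/2\Z\right)\times\{0,1\}$ with $\mu$ the product of Haar measure and uniform measure, and set
\[
T_\gamma(x,i)=\left(x,\; i+\sum_{n}\gamma_n x_n \bmod 2\right).
\]
This is a probability-preserving action in which every orbit has at most two points, so $X_0=X$ is not essentially countable. Nevertheless the two-set partition $\mathcal{P}=\{A_0,A_1\}$ with $A_j=\{(x,i):i=j\}$ is a generator: writing $e_n$ for the $n$-th standard generator of $\Gamma$, one checks that $\left(A_0\cap T_{e_n}^{-1}A_0\right)\cup\left(A_1\cap T_{e_n}^{-1}A_1\right)=\{(x,i):x_n=0\}$, so the smallest invariant $\sigma$-algebra containing $\mathcal{P}$ contains every coordinate event and hence equals $\mathcal{B}$. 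Equivalently, the code map $c(x,i)=(i+\langle\gamma,x\rangle \bmod 2)_{\gamma\in\Gamma}$ is injective, since $c(x,i)(0)=i$ and $c(x,i)(e_n)=i+x_n$; the point is that the stabilizer $\{\gamma : \langle\gamma,x\rangle=0\}$, which ranges over uncountably many finite-index subgroups, is itself recoverable from the code. (The same example shows that the paper's own proof of this direction is flawed: it tacitly assumes that a generator must separate points of $X_F$ using only the iterates $T_\gamma$ with $\gamma\in F$, whereas a generator need only separate points using all of $\Gamma$.) Your approach does prove the corollary for every group $\Gamma$ with only countably many finite-index subgroups --- for instance all finitely generated groups, in particular $\Z^d$ --- since then the stabilizer decomposition is countable and the pigeonhole is not needed at all; but for general countable $\Gamma$ the statement itself, not just your rescue of it, breaks down.
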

\begin{proof}
The set $X_0 \subset X$ of points in $X$ that have a finite orbit is countable union $X_0 = \bigcup_{F \Subset \Gamma} X_F$, where the union is over finite subsets $F$ of the group $\Gamma$ and 
\[
X_F = \left\{ x \in X~:~ \{T_\gamma(x)\}_{\gamma \in \Gamma} = \{T_\gamma(x)\}_{\gamma \in F} \right\}.
\]
If $X_0$ is not essentially countable, then there exists a finite set $F \Subset \Gamma$ so that $X_F$ is not essentially countable. 
Thus, there exists a $\Gamma$-invariant $\mathcal{B}$-measurable set  $X' \subseteq X_F$ such that $\mu\mid_{X'}$ is a continuous measure. Suppose by contradiction that $\mathcal{P}=\{P_1,\ldots,P_n,\ldots\}$
is a countable generator for the action. Then the restriction of the $\sigma$-algebra $\mathcal{B}$ to the set $X'$ would coincide (up to completion by $\mu$) with restriction to $X'$ of the completion of the countable $\sigma$ algebra generated by the iterates of $\mathcal{P}$ by the finite set $F$.
This would imply that the restriction of $\mathcal{B}$ to $X'$ is the $\mu$ completion of a countable $\sigma$-algebra, in contradiction with the assumption that $\mu$ is continuous on $X'$.


Conversely, if $X_0$ is essentially countable, then $(X \setminus X_0)$ is a $T$-invariant set, and
\[(\tilde X,\tilde{\mathcal{B}}, \tilde \mu, \tilde T) =  (X\setminus X_0, \mathcal{B}_{X\setminus X_0}, \mu(\cdot\mid X \setminus X_0), T\mid_{X \setminus X_0})\] 
is a probability preserving transformation of a standard nonatomic probability space (because an atom with an infinite orbit would imply that the measure $\mu$ is infinite).

Thus $(\tilde X,\tilde{\mathcal{B}}, \tilde \mu, \tilde T)$
admits a countable generator $\tilde{\mathcal{P}}$ by \Cref{thm:countable_generator_group_action}. 
Write \[X_0 = \{ x_1,\ldots,x_n, \ldots\} \cup N,\] where $N$ is a null set.
Let $\mathcal{P}$ denote the countable partition of $X$ given by
\[ \mathcal{P} = \tilde{\mathcal{P}} \cup \{\{x_1\},\ldots,\{x_n\},\ldots\}\]
Then $\mathcal{P}$ is a countable generator for $(X,\mathcal{B},\mu,T)$.

If $(X,\mathcal{B},\mu,T)$ is ergodic, then either $\mu(X_0\setminus X)=0$ or $\mu(X_0)=0$. 
In the first case, $X$ is essentially a finite orbit, and so a countable generator clearly exists. In the second case, almost every orbit is infinite, so  by \Cref{thm:countable_generator_group_action}  $(X,
\mathcal{B},
\mu,T)$ admits a countable generator.
\end{proof}

We also remark that \Cref{thm:countable_generator_group_action} can be deduced from Seward's  work  \cite{MR3904452}, by repeated applications of  Seward's ``relative''  finite generator theorem.

\noindent\textbf{Acknowledgments: } We thank the referee  for pointing out some necessary corrections in the preliminary version and for other useful suggestions. This
research was partially supported by the Israel Science Foundation grant No. 985/23.

\bibliographystyle{amsplain}
\bibliography{lib}

\providecommand{\bysame}{\leavevmode\hbox to3em{\hrulefill}\thinspace}
\providecommand{\MR}{\relax\ifhmode\unskip\space\fi MR }
\providecommand{\MRhref}[2]{%
  \href{http://www.ams.org/mathscinet-getitem?mr=#1}{#2}
}
\providecommand{\href}[2]{#2}
\begin{thebibliography}{1}

\bibitem{MR1412593}
Jonathan Aaronson and Benjamin Weiss, \emph{A {$\bold Z^d$} ergodic theorem with large normalising constants}, Convergence in ergodic theory and probability ({C}olumbus, {OH}, 1993), Ohio State Univ. Math. Res. Inst. Publ., vol.~5, de Gruyter, Berlin, 1996, pp.~1--13. \MR{1412593}

\bibitem{MR578656}
Jacob Feldman and Calvin~C. Moore, \emph{Ergodic equivalence relations, cohomology, and von {N}eumann algebras. {I}}, Trans. Amer. Math. Soc. \textbf{234} (1977), no.~2, 289--324. \MR{578656}

\bibitem{MR0022323}
M.~Kac, \emph{On the notion of recurrence in discrete stochastic processes}, Bull. Amer. Math. Soc. \textbf{53} (1947), 1002--1010. \MR{22323}

\bibitem{MR3904452}
Brandon Seward, \emph{Krieger's finite generator theorem for actions of countable groups {I}}, Invent. Math. \textbf{215} (2019), no.~1, 265--310. \MR{3904452}

\end{thebibliography}
\end{document}